\theoremstyle{plain}
\newtheorem{thm}{THEOREM}[section]
\newtheorem{lm}[thm]{LEMMA}
\theoremstyle{definition}
\theoremstyle{definition}
\newcommand{\upchi}{\raise1pt\hbox{$\chi$}}
\newcommand{\Z}{{\mathord{\mathbb Z}}}
\newcommand{\N}{{\mathord{\mathbb N}}}
\renewcommand{\|}{{\Vert}}
\newcommand{\ii}{{\rm i}}
\newcommand{\e}{{\rm e}}
\numberwithin{equation}{section}
\begin{document}

\title{\bf On a conjecture by Hundertmark and Simon}

\author{\vspace{5pt} Ari Laptev$^{1}$,  Michael Loss$^{2}$, and Lukas Schimmer$^{3}$ \\
\vspace{5pt}\small{$1.$ Department of Mathematics, Imperial College London}\\[-6pt]
\small{London SW7 2AZ, UK} and SPBU\\
\vspace{5pt}\small{$2.$ School of Mathematics, Georgia Institute of
Technology,} \\[-6pt]
\small{Atlanta, GA 30332 USA}\\
\vspace{5pt}\small{$3.$ Institut Mittag--Leffler, The Royal Swedish Academy of Sciences} \\[-6pt]
\small{182 60 Djursholm, Sweden}\\
 }

\footnotetext [1]{Work of Michael Loss  is partially supported by U.S.
National Science Foundation
grant DMS 1856645.}
\footnotetext [2]{
Ari Laptev was partially supported by RSF grant 18-11-0032.}

\footnotetext{
\copyright\, 2020 by the authors. This paper may be reproduced, in
its entirety, for non-commercial purposes.}

\maketitle 

\textbf{Abstract}: The main result of this paper is a complete proof of a new Lieb-Thirring type inequality for Jacobi matrices
originally conjectured by Hundertmark and Simon. In particular  it is proved that the estimate on the sum of eigenvalues does not depend on the off-diagonal terms as long as they are smaller than their asymptotic value. An interesting feature of the proof is that it employs a technique originally used by Hundertmark-Laptev-Weidl concerning sums of singular values for compact operators. This technique seems to be novel in the context of Jacobi matrices.

\section{Introduction}
In this note we prove a conjecture of Hundertmark and Simon \cite{HS} concerning a sharp Lieb-Thirring inequality for Jacobi matrices.
We denote the symmetric Jacobi matrix with diagonal entries $\{ b_n\}_{n=-\infty}^\infty$ and off-diagonal entries $\{a_n\}_{n=-\infty}^\infty$ by
$$
J:= W(\{a_n\},\{b_n\})\ .
$$
It is assumed that the $a_n$ tend to $1$ as $n \to \pm \infty$ which yields the interval $[-2,2]$ as the essential spectrum of this Jacobi matrix.
We denote by $E_j^+(J)$ the eigenvalues of $J$ that are larger than $2$ and by $E_j^-(J)$ the eigenvalues of $J$ that are less than $-2$.
Hundertmark and Simon proved that
\begin{equation} \label{hundertsimon}
\sum_j  (E_j^+(J)^2-4)^{1/2}+(E_j^-(J)^2-4)^{1/2} \le \sum_n |b_n| + 4\sum_n |a_n-1| \ ,
\end{equation}
and observed that this inequality is sharp. Indeed, in the absence of the potential, they noted that the Jacobi matrix with all entries $a_n=1$ except for a single one that is chosen to be larger than one, yields equality in \eqref{hundertsimon}. They then conjectured the improved version of \eqref{hundertsimon} in which $|a_n-1|$ is replaced by $(a_n-1)_+$,
where we use the notation $(a)_+$ to mean $a$ if $a>0$ and $0$ if $a \le 0$. We have the following theorem.
\begin{thm}  \label{ourresult}
Assume that $\sum_n |b_n| < \infty$, $ \sum_n (a_n-1)_+ < \infty$ and $\lim_{n \to \pm \infty} a_n = 1$. Then we have the bound on the eigenvalue sum
\begin{equation}\label{main}
\sum_j  (E_j^+(J)^2-4)^{1/2}+(E_j^-(J)^2-4)^{1/2} \le \sum_n |b_n| + 4\sum_n (a_n-1)_+ \ .
\end{equation}
\end{thm}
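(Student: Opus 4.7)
The plan is to deduce \eqref{main} from the Hundertmark--Simon inequality \eqref{hundertsimon} by means of a monotonicity property for the Lieb--Thirring sum.  Define the rectified off-diagonal sequence $\tilde a_n := \max(a_n,1)$ and the associated Jacobi matrix $\tilde J := W(\{\tilde a_n\},\{b_n\})$.  Since $|\tilde a_n - 1|=(a_n-1)_+$, applying \eqref{hundertsimon} to $\tilde J$ produces exactly the right-hand side of \eqref{main}.  It therefore suffices to prove
\[
\sum_j (E_j^+(J)^2-4)^{1/2}+(E_j^-(J)^2-4)^{1/2}
\;\le\;
\sum_j (E_j^+(\tilde J)^2-4)^{1/2}+(E_j^-(\tilde J)^2-4)^{1/2}.
\]

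\textbf{Reduction to a singular-value comparison.}  The layer-cake identity $(E^2-4)^{1/2}=\int_2^{|E|}\lambda(\lambda^2-4)^{-1/2}\,d\lambda$, valid for $|E|\ge 2$, rewrites each side of the desired monotonicity as an integral of the eigenvalue-counting functions $N_J^{\pm}(\lambda):=\#\{j:\pm E_j^{\pm}(J)>\lambda\}$ against a strictly positive weight, so it is enough to establish the pointwise inequalities $N_J^{\pm}(\lambda)\le N_{\tilde J}^{\pm}(\lambda)$ for every $\lambda>2$.  Splitting $a_n-1 = c_n-d_n$ with $c_n:=(a_n-1)_+$ and $d_n:=(a_n-1)_-$, the difference takes the form $\tilde J - J = \sum_{n:\,a_n<1} d_n(|n\rangle\langle n+1|+|n+1\rangle\langle n|)$, a sum of sign-indefinite rank-two blocks of norm $d_n<1$, so min--max arguments do not suffice.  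Instead, by the Birman--Schwinger principle each $N_J^{\pm}(\lambda)$ equals the number of eigenvalues $\ge 1$ of a compact operator of the form $(\lambda\mp J_0)^{-1/2}(\pm(J-J_0))(\lambda\mp J_0)^{-1/2}$, and the counting comparison becomes a comparison of singular values of two explicit compact operators differing by a controlled rank-two perturbation.

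\textbf{Applying the HLW singular-value identity.}  Denoting by $S$ the shift and by $D_b,D_c,D_d$ the diagonal operators with entries $b_n,c_n,d_n$, we have the operator identities $\tilde J - J_0 = D_b + (SD_c+D_c S^*)$ and $J - J_0 = \tilde J - J_0 - (SD_d+D_d S^*)$.  The aim is then to show that, after conjugation by $(\lambda\mp J_0)^{-1/2}$, the negative off-diagonal block $SD_d+D_d S^*$ contributes only singular values strictly below $1$ and therefore cannot generate Birman--Schwinger eigenvalues $\ge 1$.  This is precisely the setting for the Hundertmark--Laptev--Weidl inequality for sums of singular values, which delivers the correct additivity in the relevant Ky~Fan norm when the Birman--Schwinger operator is decomposed into its $D_b$, $SD_c+D_c S^*$ and $SD_d+D_d S^*$ pieces.

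\textbf{Main obstacle.}  The principal difficulty is that a naive singular-value decomposition of the Birman--Schwinger operator loses the sign information in $a_n-1$, because an isolated rank-two block $d_n(|n\rangle\langle n+1|+|n+1\rangle\langle n|)$ has the same singular values regardless of whether the perturbation there is positive or negative.  Preserving this sign structure through the Birman--Schwinger reformulation requires a careful commutator/integration-by-parts step inside the HLW identity.  Moreover, the HLW technique was originally designed for Schr\"odinger operators with exponentially decaying free resolvents; adapting it to the Jacobi setting, in which $J_0$ has bounded essential spectrum $[-2,2]$ and the free resolvent has only polynomial off-diagonal decay, is the novel and most delicate ingredient of the argument.
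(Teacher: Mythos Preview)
Your proposal is a strategy outline rather than a proof, and the step that carries all the weight is never actually executed.  You correctly observe that it would suffice to compare the Lieb--Thirring sums of $J$ and $\tilde J=W(\{\max(a_n,1)\},\{b_n\})$, but the perturbation $\tilde J-J=\sum_{a_n<1}(1-a_n)(|n\rangle\langle n{+}1|+|n{+}1\rangle\langle n|)$ is sign-indefinite, and nothing you write after that point overcomes this.  The paragraph ``Applying the HLW singular-value identity'' states an aim (``the negative off-diagonal block contributes only singular values strictly below $1$'') without any argument; this claim is not well-posed, since the Birman--Schwinger eigenvalues of the full perturbation are not obtained by adding contributions from the $b$-, $c$- and $d$-pieces separately.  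The Ky~Fan/HLW machinery controls sums of singular values of a single operator expressed as an average of unitary conjugates of a fixed operator; it does not let you discard a sign-indefinite additive piece from a Birman--Schwinger kernel.  Your ``Main obstacle'' paragraph is candid about this gap but does not close it.  (Incidentally, the free Jacobi resolvent at $\beta=\mu+\mu^{-1}$ has kernel proportional to $\mu^{|m-n|}$, which is exponential, not polynomial, decay.)

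The paper's route is genuinely different and avoids comparing $J$ with $\tilde J$ altogether.  First the entries $a_n>1$ are removed exactly as in Hundertmark--Simon, via the $2\times2$ inequality that trades an off-diagonal $a>1$ for diagonal terms $a-1$; this already produces the $(a_n-1)_+$ on the right-hand side.  One is left with a Jacobi matrix whose off-diagonal part $A$ satisfies $0\le a_n\le 1$.  The key new idea is that such an $A$ is a \emph{convex combination of unitary conjugates of the free off-diagonal part} $A_1$: flipping the sign of a single entry $\kappa$ via a diagonal unitary $U$ and writing $\kappa=\tfrac{1+\kappa}{2}\cdot 1+\tfrac{1-\kappa}{2}\cdot(-1)$ gives $A=\sum_j\lambda_jU(j)A_1U(j)$.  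Operator convexity of $X\mapsto(\beta-X)^{-1}$ then yields
\[
B^{1/2}(\beta-A)^{-1}B^{1/2}\ \le\ \sum_j\lambda_j\,U(j)\,B^{1/2}(\beta-A_1)^{-1}B^{1/2}\,U(j),
\]
after which the Hundertmark--Simon (or HLW) monotonicity in $\mu$ applies to the right-hand side exactly as in the $a_n\equiv1$ case.  Thus the role of HLW here is not to handle the sign-indefinite perturbation directly, as you attempt, but only to supply the standard monotonicity \emph{after} convexity has reduced the problem to averages of unitary conjugates of the free Birman--Schwinger kernel.  This convexity step is the missing idea in your plan.
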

The following consequence provides another justification for this short note. 
Generally, the proof of Lieb-Thirring inequalities is patterned after the ones for the continuum in which case the kinetic energy is given by $-\Delta$.  The discrete Laplacian requires that all $a_n=1$. It is, however, of some interest that in the case of Jacobi matrices this needs not be the case. This is a distinguishing feature of Lieb-Thirring inequalities for Jacobi matrices. Further, it was shown in
\cite{KS}, (see also \cite{S}) that
\begin{equation} \label{killipsimon}
\sum_j F(E_j^+(J)) + F(E_j^-(J)) \le \sum_n b_n^2 + 2 G(a_n)^2 ,
\end{equation}
where $G(a) = a^2-1-\log|a|^2$ and $F(E) = \beta^2 -\beta^{-2} - \log |\beta|^2$ and 
$E=\beta +\beta^{-1}$ with $|\beta|>1$.

In \cite{Sch} one of us proved that if  $a_n\equiv1$  then the inequality \eqref{hundertsimon} implies \eqref{killipsimon}. Using this argument and 
Theorem \ref{ourresult} we obtain as a consequence
\begin{thm}\label{anotherthm}
Let $\gamma>1/2$. Assume that $\sum_n b_n^{\gamma+1/2} < \infty$ and that $a_n\ge 0$ for all $n \in \Z$ with $\lim_{n \to \pm \infty} a_n = 1$ and $\sum_n (a_n-1)_+^{\gamma+1/2}<\infty$. Then with $\mathrm{B}(x,y)$ denboting the Beta function
\begin{equation} \label{anotherresult}
\sum_j \int_{2}^{|E_j^\pm(J)|}(t^2-4)^\frac12(|E_j^\pm(J)|-t)^{\gamma-\frac32} \,dt
\le \mathrm{B}(\gamma-1/2,2) \sum_n (\pm [b_n]_\pm\pm[a_n-1]_+\pm[a_{n-1}-1]_+)_\pm^{\gamma+\frac12} \ .
\end{equation}
\end{thm}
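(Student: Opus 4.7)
The plan is to prove Theorem \ref{anotherthm} by Aizenman--Lieb lifting from Theorem \ref{ourresult}. The substitution $\tau=|E|-t$ gives, for any $|E|>2$,
\begin{equation*}
\int_2^{|E|}(t^2-4)^{1/2}(|E|-t)^{\gamma-3/2}\,dt
=\int_0^{|E|-2}\bigl((|E|-\tau)^2-4\bigr)^{1/2}\,\tau^{\gamma-3/2}\,d\tau,
\end{equation*}
while the elementary Beta integral $p^{\gamma+1/2}=\mathrm B(\gamma-1/2,2)^{-1}\int_0^\infty(p-\tau)_+\tau^{\gamma-3/2}\,d\tau$ (for $p\ge 0$) represents the right-hand side of \eqref{anotherresult} analogously. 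Fubini then reduces \eqref{anotherresult} to the pointwise-in-$\tau$ estimate
\begin{equation}\label{shiftedineq}
\sum_{j:\,|E_j^\pm(J)|>2+\tau}\bigl((|E_j^\pm(J)|-\tau)^2-4\bigr)^{1/2}
\le\sum_n(\pi_n^\pm-\tau)_+,\qquad\tau\ge 0,
\end{equation}
where $\pi_n^\pm:=[b_n]_\pm+[a_n-1]_++[a_{n-1}-1]_+$.

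At $\tau=0$, \eqref{shiftedineq} is the natural one-sided refinement of Theorem \ref{ourresult}, namely $\sum_j(E_j^\pm(J)^2-4)^{1/2}\le\sum_n\pi_n^\pm$, which summed over $\pm$ recovers \eqref{main}. To obtain this splitting one uses the elementary $2\times 2$ operator inequality $c(|n\rangle\langle n+1|+|n+1\rangle\langle n|)\le c(|n\rangle\langle n|+|n+1\rangle\langle n+1|)$, valid for $c\ge 0$, to absorb the negative part $[a_n-1]_-$ of the off-diagonal perturbation into a diagonal correction. This bounds $J$ (respectively $-J$) above by a Jacobi matrix whose positive effective potential is $\pi_n^+$ (resp.\ $\pi_n^-$) alone; applying Theorem \ref{ourresult} to the dominating matrix and using min--max then yields the one-sided bound.

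For $\tau>0$ the strategy is to apply this one-sided bound to a parameter-dependent Jacobi matrix $J^{(\tau)}$ with essential spectrum $[-2,2]$, constructed so that $J-\tau I\le J^{(\tau)}$ (so min--max transfers shifted eigenvalues of $J$ to eigenvalues of $J^{(\tau)}$ above $2$) and so that $\pi_n^+(J^{(\tau)})\le(\pi_n^+-\tau)_+$. The construction couples the shift across diagonal and off-diagonal entries: the diagonal becomes essentially $(b_n-\tau)_+$ plus a correction, while the off-diagonal entries are reduced toward $1$, with the $2\times 2$ bound above compensating the resulting negative off-diagonal variations by the added diagonal mass. Applying the one-sided bound of the previous paragraph to $J^{(\tau)}$ then produces \eqref{shiftedineq}, and multiplying \eqref{shiftedineq} by $\tau^{\gamma-3/2}$ and integrating over $\tau\in(0,\infty)$ recovers \eqref{anotherresult}.

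The principal technical difficulty is the construction of $J^{(\tau)}$ for $\tau>0$. A rigid shift $J\mapsto J-\tau I$ moves the essential spectrum to $[-2-\tau,2-\tau]$ and invalidates the hypothesis of Theorem \ref{ourresult}; a diagonal-only modification $b_n\mapsto(b_n-\tau)_+$ gives a comparison matrix whose effective potential $(b_n-\tau)_++[a_n-1]_++[a_{n-1}-1]_+$ integrates in $\tau$ only through its first term, and the $\tau$-independent off-diagonal pieces produce a divergent integral. Achieving the correct coupling so that $\pi_n^+(J^{(\tau)})$ matches $(\pi_n^+-\tau)_+$ at every site is the crux of the argument.
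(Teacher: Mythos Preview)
Your overall strategy---Aizenman--Lieb lifting of the one-sided form of Theorem~\ref{ourresult}---is exactly the right one and matches the paper. You also correctly identify the one-sided bound $\sum_j(E_j^+(J)^2-4)^{1/2}\le\sum_n\pi_n^+$ and the mechanism behind it: the $2\times2$ inequality replaces each $a_n>1$ by $1$ at the cost of adding $(a_n-1)_+$ to the two neighbouring diagonal entries, producing a dominating matrix $\widetilde J^+=W(\{\widetilde a_n\},\{\pi_n^+\})$ with $\widetilde a_n\le 1$ everywhere. (A small slip: it is the \emph{positive} part $(a_n-1)_+$ that is absorbed this way, not $[a_n-1]_-$.)

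The gap is that you then declare the construction of $J^{(\tau)}$ to be the ``crux'' and leave it undone, when in fact no new construction is needed: the $\tau$-independent comparison $J\le\widetilde J^+$ already does the job. The point you are missing is an order-of-operations one. Do not shift $J$ and then try to bound; instead bound $J\le\widetilde J^+$ \emph{once}, and only then shift. Since every off-diagonal entry of $\widetilde J^+$ satisfies $\widetilde a_n\le 1$, applying Theorem~\ref{ourresult} (one-sided) to $\widetilde J^+-\tau$---or more precisely to $W(\{\widetilde a_n\},\{(\pi_n^+-\tau)_+\})\ge \widetilde J^+-\tau$, which restores the essential spectrum to $[-2,2]$---yields
\[
\sum_{j:\,E_j^+(J)>2+\tau}\bigl((E_j^+(J)-\tau)^2-4\bigr)^{1/2}
\;\le\;\sum_{j}\bigl(E_j^+\!\big(W(\{\widetilde a_n\},\{(\pi_n^+-\tau)_+\})\big)^2-4\bigr)^{1/2}
\;\le\;\sum_n(\pi_n^+-\tau)_+,
\]
because the off-diagonal contribution $(\widetilde a_n-1)_+$ vanishes identically. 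This is precisely your \eqref{shiftedineq}. Your worry that ``the $\tau$-independent off-diagonal pieces produce a divergent integral'' arose only because you shifted the diagonal of $J$ itself rather than of $\widetilde J^+$; once the off-diagonal excess has been moved into the diagonal, the shift automatically attacks the combined quantity $\pi_n^+$. This is exactly how the paper proceeds (``Applying first the variational principle and then the main result''), in two lines.
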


For $\gamma=3/2$ the left-hand side coincides with $\frac12\sum_{j}F(E_j^\pm(J))$. The function $G(a) \ge 0$ equals zero if and only if $a= \pm 1$ and hence \eqref{anotherresult} is an improvement over \eqref{killipsimon} for the case where $0 \le a_n \le 1$.

\medskip
\noindent
{\it Remark.}  
As proved in \cite{Sch}, the left-hand side in \eqref{anotherresult} is bounded from below by
$$
\sum_j  \int_2^{|E_j^\pm|}(t^2 - 4)^{\frac12}(|E_j^\pm|-t)^{\gamma-\frac32}\,dt
\ge 2\mathrm{B}(\gamma-1/2,3/2)\sum_{j}(|E_j^\pm|-2)^{\gamma}
$$
and by
$$
\sum_j  \int_2^{|E_j^\pm|}(t^2 - 4)^{\frac12}(|E_j^\pm|-t)^{\gamma-\frac32}\,dt
\ge \mathrm{B}(\gamma-1/2,2)\sum_{j}(|E_j^\pm|-2)^{\gamma+\frac12}\,.
$$
Thus \eqref{anotherresult} improves on corresponding Lieb-Thirring inequalities in \cite{HS}. Note that in \cite[p.121]{HS}  an argument is given that allows to replace $(a_n-1)$ in their results by $(a_n-1)_+$ for $\gamma\ge1$ but importantly not for $1/2<\gamma<1$ and not in the case of the main result \eqref{main}.

\medskip
In order to prove Theorem \ref{ourresult} we reduce the problem (as in \cite{HS})  to the discrete Schr\"odinger operator.  When treating terms $0 \le a_n \le 1$ we use some additional convexity property.

\section{The proof of the main result.}

We follow Hundertmark and Simon except for one key step. Using norm resolvent convergence we may  assume that only finitely many of the $a_n$ are not equal to one and finitely many $b_n$ are not equal to zero. Likewise, we may assume that $b_n \ge 0$. We also write
$$
W(\{a_n\},\{b_n\}) = A+B
$$
with the understanding that $A$ contains only the off-diagonal terms and $B$ the diagonal terms of the Jacobi matrix. If the off-diagonal  terms are all equal to one we denote the corresponding matrix by $A_1$.
The essential spectrum is given by the interval $[-2,2]$ which follows from Weyl's theorem. Let us denote the eigenvalues that are strictly greater than $2$ by
$E^+_1(A+B) \ge E^+_2(A+B)  \ge E^+_3(A+B)  \ge \cdots$. In what follows, the eigenvalues that are strictly less than $-2$ can be treated in a similar fashion.

\bigskip
\noindent
{\it Treating $a_n>1$:} 
Consider the window of the matrix $A$ that contains an off-diagonal term $a>1$ and use the elementary inequalities 
$$
\begin{pmatrix}
-a+1& 1\\
1 & -a+1
\end{pmatrix}
\le
\begin{pmatrix}
0 & a\\
a & 0 
\end{pmatrix} 
\le 
\begin{pmatrix}
a-1& 1\\
1 & a-1
\end{pmatrix}.
$$
Applying it to all $a_n>1$ we obtain 
$$
\widetilde J^- = W(\{\widetilde a_n\}, \{\widetilde b^-_n\} \le J = W(\{a_n\}, \{b_n\}) \le W(\{\widetilde a_n\}, \{\widetilde b^+_n\} = \widetilde J^+,
$$
where 
$$
\widetilde a_n =
\begin{cases}
 a_n, & {\rm if}  \,\,a_n\le1\\
 1, & {\rm if}  \,\,a_n >1
 \end{cases} 
 \qquad 
\widetilde b_n^\pm =
 \pm[b_n]_\pm  \pm[(a_{n-1}-1)_++  (a_n-1)_+] \ {\rm for. \ all } \ n \ ,
 $$
 where $[x]_\pm = \max\{\pm x,0\}$.
This implies
\begin{equation}\label{a_n>1}
\sum_j  (E_j^+(J)^2-4)^{1/2} \le \sum_j  (E_j^{+}(\widetilde J^+)^2-4)^{1/2}\ 
\end{equation}
and similarly
$$
\sum_j  (E_j^{-}(J)^2-4)^{1/2} \le \sum_j  (E_j^{-}(\widetilde J^-)^2-4)^{1/2}\ .
$$
This reduces the problem to the case $a_n\le1$, $n\in \Bbb Z$. 
\bigskip
\noindent

{\it Treating $a_n\le1$:}
Assuming $a_n\le 1$ we  consider the Birman-Schwinger operator
$$
K(A;\beta) := B^{1/2} (\beta - A)^{-1} B^{1/2},
$$
where $\beta>2$ and list the eigenvalues of the Birman-Schwinger operator, $E_j(B^{1/2} (\beta - A)^{-1} B^{1/2})$,  in decreasing order. The Birman-Schwinger principle states that the $j$th eigenvalue of $ B^{1/2} (E^+_j(A+B) - A)^{-1} B^{1/2}$ is one.

Let us decompose the matrix $A$ in a certain fashion. Consider the following window of the general matrix $A_\kappa$,
\begin{equation*}
\begin{pmatrix}
0 & a & 0 & 0 & 0\\
a & 0 & \kappa & 0 & 0\\
0 & \kappa & 0 & c & 0\\
0 & 0 & c & 0 & d\\
0 & 0 & 0 & d & 0
\end{pmatrix}.
\end{equation*}
The distinct notation $\kappa$ indicates that we concentrate on this particular position of the matrix. Denote by $U$ the infinite diagonal matrix that consists of $+1$ on the diagonal above the position of $\kappa$ and of $-1$ below $\kappa$, i.e., its window is given by
\begin{equation*} 
\begin{pmatrix}
1 & 0 & 0 & 0 & 0\\
0 & 1 & 0 & 0 & 0\\
0 & 0 & -1 & 0 & 0\\
0 & 0 & 0 & -1 & 0\\
0 & 0 & 0 & 0 & -1
\end{pmatrix}\ .
\end{equation*}
It has the effect that the corresponding window of the matrix $U \,A_\kappa \,U$ is given by
\begin{equation*} 
\begin{pmatrix}
0 & a & 0 & 0 & 0\\
a & 0 & -\kappa & 0 & 0\\
0 & -\kappa & 0 & c & 0\\
0 & 0 & c & 0 & d\\
0 & 0 & 0 & d & 0
\end{pmatrix} \, ,
\end{equation*}
i.e., the entry $\kappa$ changes sign and all others are unchanged. Since $U$ is unitary,
the matrices $A$ and $UA\,U$ are unitarily equivalent and have the same spectrum.
With a slight abuse of notation we now identify the matrices with their window. If we assume that $0\le \kappa <1$, we may write

\begin{equation*}
A_\kappa = 
\frac{\kappa+1}{2} \,
\begin{pmatrix}
0 & a & 0 & 0 & 0\\
a & 0 & 1 & 0 & 0\\
0 & 1 & 0 & c & 0\\
0 & 0 & c & 0 & d\\
0 & 0 & 0 & d & 0
\end{pmatrix} 
+   \frac{1-\kappa}{2}
\begin{pmatrix}
0 & a & 0 & 0 & 0\\
a & 0 & -1 & 0 & 0\\
0 & -1 & 0 & c & 0\\
0 & 0 & c & 0 & d\\
0 & 0 & 0 & d & 0
\end{pmatrix} \, ,
\end{equation*}
or if we denote by $A'$ the first matrix displayed above we can write
$$
A_\kappa =\frac{1+\kappa}{2} A' + \frac{1-\kappa}{2} UA'U \ .
$$
Repeating this for all the off-diagonal elements that are strictly less than one we find
\begin{equation} \label{convexcomb}
A = \sum_j \lambda_j U(j) \,  A_1 \, U(j),
\end{equation} 
where  off-diagonal elements of $A_1$ are equal one and  
where $ \lambda_j \ge 0, \, \sum_j \lambda_j = 1$. Since the matrices $U(j)$ are diagonal and have the matrix elements $\pm 1$,  the matrices $ A_1$ and $U(j) A_1 U(j)$ have the same eigenvalues.

\medskip
\noindent
The key observation is the following lemma

\begin{lm}\label{convex}
Let $\beta I > X$. Then the function
$$
X \rightarrow (\beta I - X)^{-1}
$$
is operator convex, i.e., if $0\le\lambda \le1$ then
$$
(\beta -\lambda X_1 - (1-\lambda) X_2)^{-1} 
\le \lambda (\beta - X_1)^{-1}+ (1 -\lambda) (\beta - X_2)^{-1} \ .
$$
\end{lm}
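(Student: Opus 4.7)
The plan is to reduce the statement to the operator convexity of $Y \mapsto Y^{-1}$ on the cone of positive operators. Since $\beta I > X_1, X_2$, the operators $Y_i := \beta I - X_i$ are both strictly positive, and $\beta I - (\lambda X_1 + (1-\lambda) X_2) = \lambda Y_1 + (1-\lambda) Y_2$. Thus the claim is equivalent to
\begin{equation*}
(\lambda Y_1 + (1-\lambda) Y_2)^{-1} \le \lambda Y_1^{-1} + (1-\lambda) Y_2^{-1},
\end{equation*}
i.e.\ the classical operator convexity of $t \mapsto 1/t$ on $(0,\infty)$.

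To prove this I would use the quadratic-form variational characterization: for any positive operator $Y$ and any vector $\phi$,
\begin{equation*}
\langle \phi, Y^{-1} \phi \rangle = \sup_{\psi} \bigl[\, 2 \operatorname{Re} \langle \phi, \psi \rangle - \langle \psi, Y \psi \rangle \,\bigr],
\end{equation*}
with the supremum attained at $\psi = Y^{-1} \phi$ (a one-line computation setting the variational derivative in $\psi$ to zero). The essential observation is that for each fixed test vector $\psi$, the bracketed expression is an \emph{affine} function of $Y$. Hence $Y \mapsto \langle \phi, Y^{-1} \phi \rangle$ is a pointwise supremum of affine functionals in $Y$, and is therefore convex. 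Specialising to $Y = \lambda Y_1 + (1-\lambda) Y_2$ and splitting $\langle \psi, Y \psi \rangle = \lambda \langle \psi, Y_1 \psi \rangle + (1-\lambda) \langle \psi, Y_2 \psi \rangle$ before taking the supremum yields
\begin{equation*}
\langle \phi, (\lambda Y_1 + (1-\lambda) Y_2)^{-1} \phi \rangle \le \lambda \langle \phi, Y_1^{-1} \phi \rangle + (1-\lambda) \langle \phi, Y_2^{-1} \phi \rangle,
\end{equation*}
and since $\phi$ is arbitrary the desired operator inequality follows.

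An equivalent, perhaps more symmetric, route is the $2\times 2$ block-matrix trick: for every positive $Y$ the block matrix with entries $Y$, $I$, $I$, $Y^{-1}$ is positive semidefinite, because its Schur complement with respect to the $(1,1)$ block vanishes. Convex combinations of positive block matrices stay positive, so applying the Schur complement to the convex combination of the blocks for $Y_1$ and $Y_2$ produces exactly the inequality above. I do not expect a genuine obstacle; the content is the standard operator convexity of $t \mapsto 1/t$, and the only care needed is to verify the variational identity (or equivalently the positivity of the block matrix), both of which are routine.
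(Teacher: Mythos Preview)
Your argument is correct. You reduce, exactly as the paper does, to the operator convexity of $Y\mapsto Y^{-1}$ on strictly positive operators, and then prove that fact by a different (but equally standard) method: the Legendre-type variational identity $\langle\phi,Y^{-1}\phi\rangle=\sup_\psi\bigl[2\operatorname{Re}\langle\phi,\psi\rangle-\langle\psi,Y\psi\rangle\bigr]$ exhibits $Y\mapsto\langle\phi,Y^{-1}\phi\rangle$ as a supremum of affine functionals, hence convex. The block-matrix alternative you sketch is also valid.

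The paper instead proceeds algebraically: it conjugates both sides by $Y_2^{1/2}$ to reduce to a one-operator inequality $(\lambda Y+(1-\lambda)I)^{-1}\le\lambda Y^{-1}+(1-\lambda)I$ with $Y=Y_2^{-1/2}Y_1Y_2^{-1/2}$, and then invokes the spectral theorem to reduce to the scalar inequality $(\lambda y+(1-\lambda))^{-1}\le\lambda y^{-1}+(1-\lambda)$ for $y>0$. Your approach has the advantage of being manifestly a convexity argument and of avoiding any manipulation of non-commuting square roots; the paper's approach has the virtue of making the reduction to scalars completely explicit. Both are short and self-contained, and neither is more general than the other for the purpose at hand.
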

\begin{proof}
We follow \cite{HP}. Let $Y_j = \beta - X_j$, $j=1,2$.     It amounts to showing that for two positive and invertible self-adjoint operators $Y_1$ and $Y_2$ we have
$$
(\lambda Y_1 +(1-\lambda)Y_2)^{-1} \le \lambda Y_1^{-1} + (1-\lambda) Y_2^{-1} \ .
$$
This is equivalent to
$$
\left[Y_2^{1/2} \left(\lambda Y_2^{-1/2} Y_1 Y_2^{-1/2} +(1-\lambda)I\right) Y_2^{1/2}\right]^{-1} \le Y_2^{-1/2} \left[ \lambda Y_2^{1/2} Y_1^{-1} Y_2^{1/2} +(1-\lambda)I\right] Y_2^{-1/2}
$$
or
$$
Y_2^{-1/2} \left(\lambda Y_2^{-1/2} Y_1 Y_2^{-1/2} +(1-\lambda)I\right)^{-1} Y_2^{-1/2}\le Y_2^{-1/2} \left[ \lambda Y_2^{1/2} Y_1^{-1} Y_2^{1/2} +(1-\lambda)I\right] Y_2^{-1/2}
$$
which is equivalent to
$$
\left(\lambda Y_2^{-1/2} Y_1Y_2^{-1/2} +(1-\lambda)I\right)^{-1}\le  \lambda Y_2^{1/2} Y_1^{-1} Y_2^{1/2} +(1-\lambda) I \ .
$$
This is an inequality in terms of the positive, invertible and self-adjoint operator 
$Y= Y_2^{-1/2} Y_1Y_2^{-1/2}$, i.e.,
$$
(\lambda Y+(1-\lambda)I)^{-1} \le \lambda Y^{-1} +(1-\lambda)I,
$$
which reduces the whole problem to positive numbers on account of the spectral theorem. For positive numbers the inequality is obvious.
\end{proof}

Applying now Lemma \ref{convex} to \eqref{convexcomb}  we find
$$
K(A;\beta) \le \sum_j \lambda_j U(j) K(A_1; \beta) U(j) \ .
$$
If we set $\beta = \mu+ \frac{1}{\mu}$ and introduce the operator
$$
L_\mu(A) := (\beta^2-4)^{1/2} K(A;\beta)
$$
we find
$$
L_\mu(A) \le \sum_j \lambda_j U(j) L_\mu(A_1) U(j)  \ .
$$
The operator $ L_\mu(A_1) $ has the matrix representation
$$
 [L_\mu(A_1)]_{m,n} = b_m^{1/2} \mu^{|n-m|} b_n^{1/2}
 $$
where, once more
$$
\beta = \mu+ \frac{1}{\mu} \ , \mu < 1 \ .
$$
Denote by $S_n(\mu)$ the sum of the $n$ largest eigenvalues of $\sum_j \lambda_j U(j) L_\mu (A_1) U(j) $.
\begin{lm}\label{HSi}
$$
S_n(\mu) \le S_n(\nu)
$$
for $\nu \ge \mu$.
\end{lm}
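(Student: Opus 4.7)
The plan is to exploit the Hundertmark--Laptev--Weidl-style integral representation of the kernel $\mu^{|k|}$ via the Poisson kernel on the circle,
\[
\mu^{|k|} = \int_{-\pi}^{\pi} P_\mu(\theta)\, e^{ik\theta}\, \frac{d\theta}{2\pi},
\qquad P_\mu(\theta) = \frac{1-\mu^2}{1-2\mu\cos\theta + \mu^2} \ge 0,
\]
valid for $0 \le \mu < 1$. Since $[L_\mu(A_1)]_{mn} = b_m^{1/2} \mu^{|m-n|} b_n^{1/2}$, this representation yields $L_\mu(A_1) = \int_{-\pi}^\pi P_\mu(\theta)\, |\psi_\theta\rangle\langle\psi_\theta|\,\frac{d\theta}{2\pi}$ with $\psi_\theta(n) := b_n^{1/2}e^{in\theta}$. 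Introducing the unitary diagonal operator $T_\theta$ with $(T_\theta)_{nn} := e^{in\theta}$, so that $\psi_\theta = T_\theta\psi_0$, and using that the $U(j)$ are diagonal and therefore commute with $T_\theta$, I would rewrite
\[
M(\mu) := \sum_j\lambda_j U(j) L_\mu(A_1) U(j) = \int_{-\pi}^\pi P_\mu(\theta)\, T_\theta \Phi_0 T_\theta^*\,\frac{d\theta}{2\pi},
\]
where $\Phi_0 := \sum_j \lambda_j |U(j)\psi_0\rangle\langle U(j)\psi_0|$ is independent of both $\mu$ and $\theta$.

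The second step is to invoke the convolution semigroup structure of the Poisson kernels, namely $P_\mu = P_\nu * P_{\mu/\nu}$, which is immediate from the Fourier identity $\widehat{P_\mu}(k) = \mu^{|k|}$ and the multiplicativity $\nu^{|k|}(\mu/\nu)^{|k|} = \mu^{|k|}$. For $0 < \mu \le \nu < 1$ one has $\mu/\nu \in (0,1]$, so $P_{\mu/\nu}$ is a legitimate probability density on $[-\pi,\pi]$. Substituting into the representation above, performing the change of variables $\theta = \phi + \psi$, and using $T_{\phi + \psi} = T_\phi T_\psi$ together with Fubini, I would obtain
\[
M(\mu) = \int_{-\pi}^\pi P_{\mu/\nu}(\phi)\, T_\phi M(\nu) T_\phi^*\,\frac{d\phi}{2\pi}.
\]
Since $S_n(\cdot)$ is the Ky Fan $n$-norm, hence a unitarily invariant norm, the triangle inequality in integral form (i.e.\ convexity of $S_n$) together with the invariance $S_n(T_\phi M(\nu) T_\phi^*) = S_n(M(\nu))$ and $\int_{-\pi}^\pi P_{\mu/\nu}(\phi)\,\frac{d\phi}{2\pi} = 1$ would immediately give
\[
S_n(\mu) = S_n(M(\mu)) \le \int_{-\pi}^\pi P_{\mu/\nu}(\phi)\, S_n(M(\nu))\,\frac{d\phi}{2\pi} = S_n(\nu),
\]
as required.

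The only conceptual point---and what makes this argument work in the present discrete setting---is that the diagonal matrices $U(j)$ commute with the modulation unitary $T_\theta$; this lets the $\theta$-dependence in the Poisson representation collapse into a single unitary conjugation, after which the convolution semigroup of the Poisson kernels and the convexity of the Ky Fan norm do the rest. I would not expect any genuine obstacle beyond verifying this commutativity and the standard Fourier/convolution identities for $P_\mu$.
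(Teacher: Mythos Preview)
Your proposal is correct and is essentially the paper's second proof (the Hundertmark--Laptev--Weidl approach): your Poisson kernel $P_\mu$ is the paper's $\sqrt{2\pi}\,g_\mu$, your modulation $T_\theta$ is the paper's $V(k)^*$, and the convolution identity $P_\mu=P_\nu*P_{\mu/\nu}$ together with the Ky~Fan norm inequality is precisely what the paper uses. Your only cosmetic difference is bundling the $U(j)$-average into the fixed operator $\Phi_0$ before applying the convolution step, which slightly streamlines the bookkeeping but does not change the argument.
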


We will present two proofs of this Lemma. 
\begin{proof}[Proof of Lemma \ref{HSi} following Hundertmark and Simon \cite{HS}]
Pick any bounded sequence $\{ \mu_n\}_{n=-\infty}^\infty$ and consider the matrix
$$
M_{\{\mu_n\}} := \sum_j \lambda_j U(j) L_{\{\mu_n\}} (A_1) U(j)\ ,
$$
where
$$
(L_{\{\mu_n\}})_{k,\ell} = \begin{cases} b_k^{1/2} \mu_k \cdots \mu_{\ell-1}  b_\ell^{1/2}  &{\rm if } \  k \le \ell  \\   (L_{\{\mu_n\}})_{\ell, k} & {\rm if} \ k > \ell   \ .\end{cases}
$$
Now we fix some integer $n$ and set $\mu_n = \mu$. All the other ones are fixed. The matrix $\sum_j \lambda_j U(j) L_{\{\mu_n\}}U(j) $ is an affine function of $\mu$
with a diagonal that is independent of $\mu$ and hence the matrix is of the form
$$
\left[\begin{array}{cc} A & 0 \\ 0 & B \end{array} \right] +\mu \left[\begin{array}{cc} 0 & C \\ C^* & 0 \end{array} \right] \ .
$$
Now we consider the sum of the top $n$ eigenvalues of this matrix and denote this function by $f(\mu)$. This function is convex. Moreover,
if we consider the diagonal matrix $V: \ell^2 \rightarrow \ell^2$ given by 
$$
[V\phi]_n = (-1)^n \phi_n 
$$
we find that
$$
V\sum_j \lambda_j U(j) L_{\{\mu_n\}}(A_1) U(j) V = \sum_j \lambda_j V U(j) L_{\{\mu_n\}}(A_1)U(j) V = \sum_j \lambda_j U(j)V L_{\{\mu_n\}}(A_1)VU(j) 
$$
since the matrix $V$ is also diagonal and commutes with the $U(j)$. This matrix has the same spectrum but is of the form
$$
\left[\begin{array}{cc} A & 0 \\ 0 & B \end{array} \right] -\mu \left[\begin{array}{cc} 0 & C \\ C^* & 0 \end{array} \right]
$$
and hence $f(\mu)$ is even. Thus $f$, being convex, is monotone for $\mu>0$. 
\end{proof}

\medskip
\noindent 
{\it Remark.} The above proof is closely related to the proof of a similar result from \cite{HLTh} except for using some symmetry property rather than perturbation at the spectral point zero.

\begin{proof}[Proof of Lemma \ref{HSi} following Hundertmark, Laptev and Weidl \cite{HLW}]
We aim to use the following abstract result of \cite{HLW} concerning the sum $\|T\|_n$ of the largest $n$ singular values of a compact operator $T$, 
$$
\|T\|_n=\sum_{j=1}^n\sqrt{E_j(T^*T)}\,.
$$
 The result is an immediate consequence of $\|T\|_n$ defining a norm by Ky-Fan's inequality. 
\begin{lm}\label{lem:maj} 
Let $T$ be a non-negative compact operator on a Hilbert space $\mathcal{G}$, let $g$ be a probability measure on $\Omega$, and let $\{V(k)\}_{k\in\Omega}$ be a family of unitary operators on $\mathcal{G}$. Then, for any $n\in\N$,
$$
\left\Vert\int_{\Omega}V(k)^*TV(k)\,dg(k)\right\Vert_n\le  \|T\|_n\,.
$$
\end{lm}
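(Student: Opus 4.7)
The plan is to apply the Ky Fan variational characterization of the $n$-norm: for any non-negative compact operator $S$ on $\mathcal{G}$,
$$\|S\|_n \;=\; \max\bigl\{\tr(SP) : P=P^*=P^2,\ \operatorname{rank}P \le n\bigr\},$$
an immediate consequence of the Courant--Fischer min-max principle (for $S \ge 0$ the singular values coincide with the eigenvalues). The operator $S := \int_\Omega V(k)^*TV(k)\,dg(k)$ is self-adjoint and non-negative because each integrand is, and it is compact as the limit in operator norm of finite convex combinations of unitary conjugates of the compact operator $T$.

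Inserting the variational formula and using linearity of the trace inside the Bochner integral together with cyclicity gives
$$\|S\|_n \;=\; \max_{P} \int_\Omega \tr\bigl(T\, V(k)PV(k)^*\bigr)\,dg(k).$$
The trivial direction of the sup/integral interchange, $\sup_P \int \le \int \sup_P$, then yields
$$\|S\|_n \;\le\; \int_\Omega \max_{P}\tr\bigl(T\,V(k)PV(k)^*\bigr)\,dg(k).$$
For each fixed $k$, as $P$ ranges over all orthogonal projections of rank $\le n$, so does $V(k)PV(k)^*$, because $V(k)$ is unitary (idempotence, self-adjointness and rank are all preserved under $P \mapsto V(k)PV(k)^*$). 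The inner maximum is therefore exactly $\|T\|_n$ by the same variational formula applied to $T$, and since $g$ is a probability measure the right-hand side collapses to $\|T\|_n$, which is the claim.

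The one subtle point I anticipate is measurability: one tacitly needs $k \mapsto \tr\bigl(TV(k)PV(k)^*\bigr)$ to be $g$-measurable for every rank-$n$ projection $P$, which is automatic as soon as $k \mapsto V(k)$ is strongly measurable, as is implicit in assuming that the Bochner integral defines a bounded operator. If one prefers to avoid the variational route altogether, one may approximate $g$ by finitely supported probability measures $g_N$, apply the Ky Fan triangle inequality together with the unitary invariance $\|V(k)^*TV(k)\|_n = \|T\|_n$ to obtain the estimate for each $g_N$, and then pass to the limit using continuity of $\|\cdot\|_n$ in operator norm on compact operators. Either route is short; the main conceptual point is simply that the Ky Fan norm is unitarily invariant and subadditive, so it cannot grow under averaging by unitary conjugations.
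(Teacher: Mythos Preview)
Your proof is correct and aligns with the paper's own treatment. The paper does not spell out a detailed argument either; it simply states that the lemma ``is an immediate consequence of $\|T\|_n$ defining a norm by Ky-Fan's inequality,'' which is precisely the second route you sketch (subadditivity plus unitary invariance), while your primary variational argument is the standard way one proves that $\|\cdot\|_n$ is a norm in the first place.
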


To apply the above result to $L_\mu(A_1)$, we recall the unitary map $\mathcal{F}:L^2([-\pi,\pi])\to \ell^2(\Z)$ onto the Fourier coefficients
$$(\mathcal{F}u)_n=\widehat{u}_n=\frac{1}{\sqrt{2\pi}}\int_{-\pi}^{\pi}\e^{\ii n k}u(k)d k\,,\qquad
(\mathcal{F}^*u)(k)=\widecheck{u}(k)=\frac{1}{\sqrt{2\pi}}\sum_{n} u_n\e^{-\ii n k}\,.
$$
By means of the transform $\mathcal{F}$, the free operator $W$ with $a_n\equiv1, b_n\equiv0$ is unitarily equivalent to the operator $-2\cos k$ on $L^2([-\pi,\pi])$. Defining $g_\mu$
to be the non-negative function
$$
g_\mu(k)=\frac{1}{\sqrt{2\pi}}\frac{\frac{1}{\mu}-\mu}{-2\cos k+\frac{1}{\mu}+\mu}
$$
and denoting by  $T$ the projection onto $b_n^{1/2}$, 
and by $V(k)$ the unitary operator  $(V(k)u)_n=\e^{-\ii n k} u_n$, we can thus write
$$
L_{\mu}(A_1)=\int_{-\pi}^\pi V(k)^*T V(k) \frac{{g}_{\mu}(k)}{\sqrt{2\pi}}d k\,.
$$
To obtain some more properties of $g_\mu$ we note that its Fourier transform is given by $(\widehat{g}_\mu)_n=\mu^{|n|}$. This can in particular be used to establish the aforementioned matrix representation of $L_\mu(A_1)$. For our purposes, we note that for $0<\mu<\nu< 1$  clearly
\begin{equation}
 (\widehat{g}_{\mu})_0=1\,,\qquad \widehat{g}_{\nu}\widehat{g}_{\mu/\nu}=\widehat{g}_{\mu}\,.
 \label{eq:propg}
\end{equation}
Since ${g}_{\mu}$ is smooth and periodic in $k$, the pointwise identity
$$
g_{\mu}(k)=\frac{1}{\sqrt{2\pi}}\sum_{n} (\widehat{g}_{\mu})_n\e^{-\ii n k}
$$
holds and the properties \eqref{eq:propg} imply
$$
\int_{-\pi}^\pi \frac{g_{\mu}(k)}{\sqrt{2\pi}}\, d k=1\,,
\qquad 
\frac{g_{\nu}}{\sqrt{2\pi}}* \frac{g_{\mu/\nu}}{\sqrt{2\pi}}
=\frac{g_{\mu}}{\sqrt{2\pi}}
$$
for all $0<\mu<\nu< 1$. 
The convolution identity is understood in the sense that
$$
\int_{-\pi}^\pi \frac{g_{\nu}(k-k')}{\sqrt{2\pi}}\frac{g_{\mu/\nu}(k')}{\sqrt{2\pi}}\,d k'=\int_{-\pi}^\pi \frac{g_{\nu}(k')}{\sqrt{2\pi}}\frac{g_{\mu/\nu}(k-k')}{\sqrt{2\pi}}\,d k'=\frac{g_{\mu}(k)}{\sqrt{2\pi}}
$$
which is well-defined since all three functions are periodic.  
Using this identity together with the fact that $V(k'+k'')=V(k')V(k'')$ and that $V(k')$ and $U(j)$ commute as both are multiplication operators, we obtain 
\begin{multline*}
\sum_{j}\lambda_j U(j)L_{\mu}(A_1)U(j)
=\sum_{j}\lambda_j\int_{-\pi}^\pi\int_{-\pi}^\pi U(j)V(k)^*T V(k)U(j) \frac{g_{\nu}(k-k')}{\sqrt{2\pi}}\frac{g_{\mu/\nu}(k')}{\sqrt{2\pi}}\,d  k'\,d  k\\
=\int_{-\pi}^\pi V(k')^*\left(\sum_{j}\lambda_j U(j)\int_{-\pi-k''}^{\pi-k''} V(k'')^*T V(k'') \frac{g_{\nu}(k'')}{\sqrt{2\pi}}\,d  k''U(j)\right)V(k')\frac{g_{\mu/\nu}(k')}{\sqrt{2\pi}}\,d  k'\,.
\end{multline*}
By periodicity, the operator 
$$
\int_{-\pi-k'}^{\pi-k'} V(k'')^*T V(k'') \frac{g_{\nu}(k'')}{\sqrt{2\pi}}\,d  k''
=\int_{-\pi}^{\pi} V(k'')^*T V(k'') \frac{g_{\nu}(k'')}{\sqrt{2\pi}}\,d  k''=L_{\nu}(A_1)
$$
is independent of $k'$  and  thus we can apply Lemma \ref{lem:maj} to obtain the desired monotonicity. The special case $\nu=1$ is an immediate consequence of Lemma \ref{lem:maj}.   
\end{proof}
\medskip
\noindent 
{\it Remark.} The above proof is closely related to the continuous case \cite{HLW} with the established convolution identity for $g_\mu$ replacing the fact that the Cauchy distribution is a convolution semigroup.  Using the parametrisation $\mu=\e^{-\sigma}, \nu=\e^{-\tau}$ with $\sigma>\tau>0$ the above identity may also be written in the more similar form $\frac{g_{\tau}}{\sqrt{2\pi}}*\frac{g_{\sigma-\tau}}{\sqrt{2\pi}}=\frac{g_{\sigma}}{\sqrt{2\pi}}$. 

\medskip
 
Because of the Birman-Schwinger principle, $E_j(K(A;E_j^+))$, the $j$-the eigenvalue of $K(A; E_j^+)$, equals $1$ and hence
$$
\sum_{j=1}^n (E_j^{+2}(J)-4)^{1/2}  = \sum_{j=1}^n  (E_j^{+2}(J)-4)^{1/2} E_j( K(A; E^+_j) ) = \sum_{j=1}^n E_j(L_{\mu_j}(A))
$$
$$
\le \sum_{j=1}^n  E_j(  \sum_k \lambda_kU(k) L_{\mu_j}(A_1) U(k)) \le  S_n(\mu_n)
$$
where $\mu_j+\frac{1}{\mu_j} = E^+_j$. Now, again, we proceed as in Hundertmark - Simon \cite{HS} and get the estimate
$$
S_{N^+}(\mu_n) \le {\rm Tr} \sum_j \lambda_j U(j) L_{\mu=1}(A_1) U(j)  = {\rm Tr} B \ ,
$$
where $S_{N^+}(\mu_n)$ includes all the eigenvalues of $\widetilde J$ that are greater than $2$. In other words
$$
 \sum_j  (E_j^{+2}(\widetilde J^+)-4)^{1/2} \le \sum_n [b_n]_+ + (a_{n-1}-1)_++  (a_n-1)_+ = \sum_n [b_n]_+ + 2 (a_n-1)_+ \ .
 $$

As shown in \cite{HS} the 
Jacobi matrices
$$
W(\{a_n\},\{b_n\}) \ {\rm and }  -W(\{a_n\}, \{-b_n\})
$$
are unitarily equivalent and hence it follows that
$$
 \sum_j  (E_j^{-2}(\widetilde J^-)-4)^{1/2} \le \sum_n [b_n]_- + (a_{n-1}-1)_++  (a_n-1)_+ = \sum_n [b_n]_-+ 2 (a_n-1)_+ \ ,
 $$
which together with the previous estimate proves Theorem  \ref{ourresult}.

As a corollary we obtain Theorem \ref{anotherthm}. 
\begin{proof}[Proof of Theorem \ref{anotherthm}]
Let $\chi_{\{c,d\}}$ be the characteristic function of the interval $(c,d)$. Then, recalling $J\le\widetilde{J}^+$, we have 
\begin{multline*}
\sum_j  \int_2^{E_j^+}(t^2 - 4)^{\frac12}(E_j^+-t)^{\gamma-\frac32} \, dt
=  \sum_j \int_0^\infty \left((E_j^+ (J-s) )^2 - 4\right)^{\frac12} s^{\gamma-\frac32}\chi_{_{\{E_j^+(J -s) \ge 2\}}} (s) \, ds\\
\le  \sum_j \int_0^\infty \left((E_j^+ (\widetilde{J}^+-s) )^2 - 4\right)^{\frac12} s^{\gamma-\frac32}\chi_{_{\{E_j^+(\widetilde{J}^+ -s) \ge 2\}}} (s) \, ds\,.
\end{multline*} 
Applying first the variational principle and then the main result we obtain 
$$
\sum_j  \int_2^{E_j^+}(t^2 - 4)^{\frac12}(E_j^+-t)^{\gamma-\frac32} \, dt
\le\sum_{n}\int_0^\infty(\widetilde{b}^+_n-s)_+s^{\gamma-\frac32}\,ds
= \mathrm{B}(\gamma-1/2,2)\, \sum_n (\widetilde{b}^+_n)_+^{\gamma+\frac12}
$$
and the proof is complete.

\end{proof}

{\bf Acknowledgment: } M.L. would like to thank Institute Mittag Leffler for its generous hospitality.

\end{document}